\newtheorem{lem}{Lemma}
\newtheorem{thm}{Theorem}
\newtheorem{prp}{Property}
\newtheorem{pro}{Proposition}
\newtheorem{cor}{Corollary}
\newtheorem{exm}{Example}
\journal{}
\begin{document}

\begin{frontmatter}

\title{Some new general lower bounds for mixed metric dimension of graphs}

\author{Milica Milivojevi\'c Danas\fnref{kg}}
\ead{milica.milivojevic@kg.ac.rs}

\author{Jozef Kratica \fnref{mi}}
\ead{jkratica@gmail.com}

\author{Aleksandar Savi\'{c}  \fnref{matf}}
\ead{aleks3rd@gmail.com}

\author[va]{Zoran Lj. Maksimovi\'{c}\corref{cor1}}
\ead{zoran.maksimovic@gmail.com}

\address[kg]{Faculty of Science and Mathematics, University of Kragujevac, Radoja Domanovi\' ca 12, Kragujevac, Serbia }
\address[mi]{Mathematical Institute, Serbian Academy of Sciences and Arts, Kneza Mihaila 36/III, 11 000 Belgrade, Serbia}
\address[matf]{Faculty of Mathematics, University of Belgrade, Studentski trg 16/IV, 11000 Belgrade, Serbia}
\address[va]{University of Defence, Military Academy, Generala Pavla Juri\v{s}i\'{c}a \v{S}turma 33, 11000 Belgrade, Serbia}

\begin{abstract}
Let $G=(V,E)$ be a connected simple graph. The distance $d(u,v)$ between vertices $u$ and $v$ from $V$ is the number of edges in the shortest $u-v$ path. If $e=uv \in E$ is an edge in $G$ than distance $d(w,e)$ where $w$ is some vertex in $G$ is defined as $d(w,e)=\min(d(w,u),d(w,v))$. Now we can say that vertex $w \in V$ resolves two elements $x,y \in V \cup E$ if $d(w,x) \neq d(w,y)$. The mixed resolving set is a set of vertices $S$, $S\subseteq V$ if and only if any two elements of $E \cup V$ are resolved by some element of $S$. A minimal resolving set related to inclusion is called mixed resolving basis, and its cardinality is called the mixed metric dimension of a graph $G$.

This graph invariant is recently introduced and it is of interest to find its general properties and determine its values for various classes of graphs. Since the problem of finding mixed metric dimension is a minimization problem, of interest is also to find lower bounds of good quality. This paper will introduce three new general lower bounds. The exact values of mixed metric dimension for torus graph is determined using one of these lower bounds. Finally, the comparison between new lower bounds and those known in the literature will be presented on two groups of instances:
\begin{itemize}
\item all 21 conected graphs of order 5;
\item selected 12 well-known graphs with order from 10 up to 36.
\end{itemize}
\end{abstract}

\begin{keyword}
mixed metric dimension \sep  mixed resolving set \sep mixed metric generator
\sep torus graph \sep general lower bounds \end{keyword}

\end{frontmatter}

%% \linenumbers

\section{Introduction}

\subsection{Literature review}

Problem of metric dimension was introduced in the seventies by \cite{sla75} and \cite{har76} independently. Let $G=(V,E)$ be a connected simple graph. Distance in the graph will be measured by number of edges in the shortest $u-v$ path in $G$ where $u,v \in V$, and denoted as $d_G(u,v)$. Then, for vertex $w$ will be said that \textit{resolves} vertices $u$ and $v$ if $d_G(w,u) \neq d_G(w,v)$. Set $S \subseteq V$ will be denoted \textit{resolving set} if any pair of vertices from $V$ are resolved by some element from $S$. If we order set $S$ as $S=\{w_1,\ldots,w_k\}$ than for every vertex $u \in V$ we can determine vector of resolving coordinates given by $r_S(u)=(d_G(u,w_1),\ldots,d_G(u,w_k))$. In this context, $S$ is resolving set if every vertex $u$ has unique vector of resolving coordinates. Minimal resolving set, in the sense of inclusion, is called \textit{metric basis} and its cardinality \textit{metric dimension} of graph $G$. Metric dimension of graph $G$ will be denoted as $\beta(G)$. The term resolving set was introduced by Harrary and Melter, while Slater used term \textit{locating sets}. In the literature synonymous to these terms is also \textit{metric generators}. In order to simplify notation we will replace $d_G(u,v)$ with $d(u,v)$ and denote metric dimension as $\beta(G)$ for graph $G$. Also, in the rest of this article we will assume that graphs in considerations are always connected and simple.

There are several proposed applications for metric dimension in the literature. Originally, Slater considered unique recognition of intruders in the network, while others observed problems of navigating robots in networks \cite{khul96}, chemistry \cite{john93,char00}, some applications in pattern recognition and image processing \cite{mel84}.

After initial works, some variations of this problem were introduced such as resolving dominating sets \cite{bri03}, independent resolving sets \cite{char03}, strong metric dimension \cite{seb04}, local metric dimension \cite{oka10} among others.

As metric dimension and resolving sets give some information about vertices of the graph, it is natural to ask if there is some parameter, or graph invariant, which deals in the same way with graph's edges. Answer to that question was given by Kelenc et al. in \cite{kel18}, where authors introduced edge metric dimension of graphs. The distance between vertex and edge in a graph was defined as smaller of distances between given vertex and endpoints of a given edge. Formally, if $w \in V$ and $e=\{u,v\}$ then

\begin{equation}\label{dis}
d(w,e)=\min(d(w,u),d(w,v)).
\end{equation}

Now, vertex $w$ resolves two edges $e_1$ and $e_2$ if $d(w,e_1) \neq d(w,e_2)$. Similarly as for metric dimension, the edge resolving set $S \subseteq  V$ is a set of vertices such that for any pair of edges from $E$, there is some element in $S$ that resolves them. Consequently, the minimal edge resolving set is edge metric basis and its cardinality is called edge metric dimension of graph and is denoted as $\beta_E(G)$.

Finally, as there is metric and edge metric dimension of a graph, Kelenc et al. (2017) in \cite{yer17} recently introduced a concept of mixed metric dimension and initiated the study of its mathematical properties. In their article, vertex $w$ resolves two items $x,y \in V \cup E$ if $d(w,x) \neq d(w,y)$. Mixed resolving set $S \subseteq V$ is described as set such that for any pair of elements from $V \cup E$ there is some element in $S$ that resolves them. Following the earlier definitions, mixed metric basis and mixed metric dimension are defined as minimal mixed resolving set and cardinality of such minimal resolving set, respectively. Mixed metric dimension will be denoted as $\beta_M(G)$.

\begin{exm}Graph $G$ from Figure $\ref{smallgraph}$ is small graph with 5 vertices and 7 edges. The mixed metric dimension is equal 5, while metric dimension and edge metric dimension are equal 3 and 4, respectively. The mixed metric basis is $\{v_1,v_2,v_3,v_4,v_5\}$, i.e.  all vertices are elements of the basis and deleting any element from basis,  it will always exist two items with the same coordinates. All presented data is obtained by a total enumeration.
\end{exm}

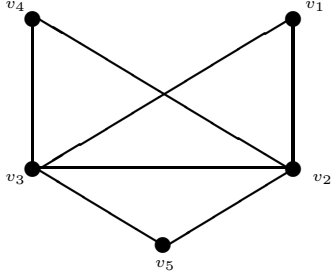
\begin{figure}[htbp]
\label{smallgraph}
\centering\setlength\unitlength{1mm}
\begin{picture}(50,44)
\thicklines
\tiny
\put(40.2,35.0){\line(0,-1){20.0}} %grana u1u2
\put(41.8,36.5){$v_1$}
\put(40.2,15.0){\line(-5,-3){17.3}} %grana u3u2

\put(42.8,13.5){$v_2$}
\put(22.9,5.0){\line(-5,3){17.3}} %GRANA u3u4
\put(21.9,2.0){$v_5$}
\put(5.6,15.0){\circle*{2}} %tacka podebljana u u4
\put(5.6,15.0){\line(0,1){20.0}} %grana u4u5
\put(2.0,13.5){$v_3$}
\put(40.0,35.2){\line(-5,-3){34.0}} %grana u4u5
\put(5.5,15.3){\line(2,0){34.0}} %grana u4u5
\put(2.0,36.5){$v_4$}

\put(40,14.9){\line(-5,3){34}} %GRANA u3u4
%%\put(21.9,45.0){\color{white}\circle*{2}}
\put(40.2,35.0){\circle*{2}}

\put(40.2,15.0){\circle*{2}} %cvor u2
\put(22.9,5.0){\circle*{2}}

\put(5.6,35.0){\circle*{2}}%cvor u5
\end{picture}
\caption{Small graph $G$ with 5 vertices}
\end{figure}

In the article \cite{yer17} where problem of mixed metric dimension was introduced, the authors presented some facts considering structure of mixed resolving sets. In cases were upper and lower bounds of mixed metric dimension could be easily obtained, the authors presented characterization of graphs whose mixed metric dimension reaches these bounds. Since lower and upper bounds are 2 and $n$, the authors have shown that for paths mixed metric dimension is 2, and mixed metric dimension is equal $n$ if and only if every vertex has a maximal neighbor. In order to better present mixed metric dimensions, the authors determined exact values for some classes of graphs, notably cycles, trees, complete bipartite graphs and also grid graphs. Moreover, few general lower/upper bounds are presented. Finally, the authors proved that computing mixed metric dimension is NP-hard in general case.

Since, the ILP formulation of the mixed metric problem is given in the paper \cite{yer17}, it naturally produces another lower bound, notably the LP relaxation of the given problem.

Some other classes of graphs also attracted attention of researchers, so among the literature it can be found Raza et al. (2019) which in \cite{raz19} gave the exact value of mixed metric dimension for three well-known classes of graphs: prism, antiprism and graph of convex polytope $R_n$. And similarly, Milivojevi\'c Danas in \cite{mil20}, provided the exact results for two other important well-known classes of graphs: flower snarks and wheels.

\subsection{Definitions and properties}

Let us denote $deg_v$ as degree of vertex $v \in V$ and
$\delta(G)$ as the minimum degree of vertices in $G$,
i.e. $\delta(G) = \min \{deg_v | v \in V\} $.

Following propositions, theorems and their corollaries will be used in the next section. We present them as they were stated in articles \cite{yer17,fil19}.

\begin{prp} \mbox{\rm(\cite{yer17})} \label{alldim} For any graph $G$ it holds
 \begin{equation}\label{Y01}\beta_{M}(G)\geq \max\{\beta(G),\beta_{E}(G)\}. \end{equation}
\end{prp}

For two vertices $u, v \in G$, we will say that they are \textit{false twins} if they have the same open neighborhoods, i.e., $N(u) = N(v)$. On the other hand,
the vertices $u, v$ are called \textit{true twins} if $N[u] = N[v]$. Also, vertex $v$ is called an \textit{extreme vertex} if $N(v)$ induces a complete graph.

\begin{pro}\mbox{\rm(\cite{yer17})} \label{edim4}
If $u, v$ are true twins in a graph $G$, then $u, v$ belong to every mixed metric generator for $G.$
\end{pro}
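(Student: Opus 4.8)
The plan is to argue by contradiction: assume $S$ is a mixed resolving set that omits one of the twins, say $u \notin S$, and then exhibit a pair of elements of $V \cup E$ that no vertex of $S$ can possibly resolve, contradicting the defining property of a mixed resolving set.

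First I would record the structural consequences of the hypothesis $N[u] = N[v]$. Since $u \in N[u] = N[v]$ and $u \neq v$, the twins must be adjacent, so $e = uv$ is a genuine edge of $G$; moreover the common closed neighborhood splits as $N(u) = C \cup \{v\}$ and $N(v) = C \cup \{u\}$, where $C = N(u) \cap N(v)$. The crucial metric fact to extract is that $d(w,u) = d(w,v)$ for every vertex $w \notin \{u,v\}$. To see this I would take a shortest $w$--$u$ path and inspect its penultimate vertex $z \in N(u)$: either $z \in C$, in which case $z$ is adjacent to $v$ and hence $d(w,v) \leq d(w,u)$, or $z = v$, in which case trivially $d(w,v) \leq d(w,u) - 1$. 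Either way $d(w,v) \leq d(w,u)$, and the symmetric argument yields $d(w,u) \leq d(w,v)$, so the two distances coincide.

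The heart of the argument is then the choice of the right unresolved pair, namely the vertex $v$ together with the edge $e = uv$. I would check that no $w \in S$ separates them. Any such $w$ is different from $u$, since $u \notin S$. If $w = v$, then $d(v,v) = 0$ while $d(v,e) = \min(d(v,u),d(v,v)) = 0$, so the two coordinates agree. If $w \neq u,v$, then the distance fact above gives $d(w,e) = \min(d(w,u),d(w,v)) = d(w,v)$, so again $w$ fails to resolve the pair. Since no vertex of $S$ resolves $v$ and $e$, the set $S$ cannot be a mixed resolving set, a contradiction; hence $u \in S$. Applying the same reasoning with the roles of $u$ and $v$ exchanged, now using the pair consisting of the vertex $u$ and the same edge $uv$, forces $v \in S$ as well.

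The step I expect to be most delicate is not the contradiction itself, which is short, but the clean verification of $d(w,u) = d(w,v)$, where the penultimate-vertex case $z = v$ must be handled so that both inequalities are obtained. The real content of the proof is the observation that, from the vantage point of every vertex except $u$, the edge $uv$ is metrically indistinguishable from its endpoint $v$; this is precisely what pins both twins into every mixed metric generator.
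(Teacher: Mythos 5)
The paper states this proposition without proof, citing \cite{yer17}, so there is no internal proof to compare against. Your argument is correct and is essentially the standard one: true twins are adjacent, every vertex $w\notin\{u,v\}$ satisfies $d(w,u)=d(w,v)$, and hence the vertex $v$ and the edge $uv$ receive identical coordinates from any set avoiding $u$ (including the degenerate check at $w=v$, which you handle). The verification of $d(w,u)=d(w,v)$ via the penultimate vertex of a shortest path is sound, since in both cases you only extract the one-sided inequality $d(w,v)\leq d(w,u)$ before invoking symmetry.
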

\begin{pro}\mbox{\rm(\cite{yer17})} \label{edim5}
If $u, v$ are false twins in a graph $G$ and $S$ is a mixed metric generator for $G$, then $\{u,v\}\cap S\neq \emptyset.$
\end{pro}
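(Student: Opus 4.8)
The plan is to argue by contradiction, exploiting the fact that false twins look identical from every external vantage point. First I would isolate the central observation: if $u,v$ are false twins, so that $N(u)=N(v)$, then $d(w,u)=d(w,v)$ for every vertex $w\in V\setminus\{u,v\}$. To prove this, take a shortest $w$--$u$ path; its final edge joins $u$ to some neighbour $x\in N(u)$, so $d(w,u)=d(w,x)+1$. Since $N(u)=N(v)$ we also have $x\in N(v)$, giving $d(w,v)\le d(w,x)+1=d(w,u)$. Swapping the roles of $u$ and $v$ yields the reverse inequality, and hence $d(w,u)=d(w,v)$.

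Next I would set up the contradiction. Assume $\{u,v\}\cap S=\emptyset$. Then every $w\in S$ differs from both $u$ and $v$, so the observation above applies to each such $w$ and gives $d(w,u)=d(w,v)$. Equivalently, no element of $S$ resolves the pair of vertices $u,v$.

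Finally, since $u\neq v$ are two distinct elements of $V\subseteq V\cup E$, the defining property of a mixed metric generator would force some vertex of $S$ to resolve them; but we have just shown that no vertex of $S$ does so. This contradicts the hypothesis that $S$ is a mixed metric generator, and therefore $\{u,v\}\cap S\neq\emptyset$, as claimed.

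The only step with genuine content is the distance identity for false twins; everything afterwards is a direct separation argument. Accordingly I expect the (routine) shortest-path computation to be the main obstacle, the one point to be careful about being that it must hold for \emph{every} $w\neq u,v$ at once --- which is precisely what the assumption $u,v\notin S$ guarantees. Note that no edge of $G$ is needed as a witness here: the unresolved pair is the vertex pair $\{u,v\}$ itself, so the argument is insensitive to whether $u$ and $v$ are adjacent.
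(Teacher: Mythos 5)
Your proof is correct: the distance identity $d(w,u)=d(w,v)$ for all $w\notin\{u,v\}$ is established properly via the penultimate vertex of a shortest path, and the contradiction with $S$ resolving the vertex pair $\{u,v\}$ is immediate. The paper itself states this proposition without proof (it is quoted from Kelenc et al.\ \cite{yer17}), and your argument is the standard one for that source, so there is nothing to flag.
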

\begin{pro}\mbox{\rm(\cite{yer17})} \label{edim6}
If $u$ is  a simplicial (extreme) vertex in a graph $G$, then $u$ belongs to every mixed metric generator for $G$.
\end{pro}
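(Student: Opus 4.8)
The plan is to use the standard membership criterion: to show that $u$ lies in \emph{every} mixed metric generator, it suffices to exhibit a single pair of elements $x,y \in V \cup E$ that is resolved by $u$ but by no other vertex of $G$. Once such a pair is found, any mixed metric generator $S$ must contain $u$, for otherwise $x$ and $y$ would share the same vector of resolving coordinates with respect to $S$, contradicting that $S$ resolves $V \cup E$.

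For the pair I would take a neighbor $v \in N(u)$ (such a neighbor exists since $G$ is connected with at least two vertices) together with the incident edge $e = uv$, and compare the vertex $v$ with the edge $e$. First I would verify that $u$ itself resolves this pair: indeed $d(u,v) = 1$, while $d(u,e) = \min(d(u,u),d(u,v)) = 0$, so $d(u,v) \neq d(u,e)$.

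The heart of the argument is to show that no vertex $w \in V \setminus \{u\}$ resolves the pair $(v,e)$; equivalently, that $d(w,v) = d(w,e) = \min(d(w,u),d(w,v))$, which holds precisely when $d(w,v) \le d(w,u)$. To establish this inequality I would exploit the simplicial hypothesis as follows. Fix a shortest $w$--$u$ path and let $v'$ be the neighbor of $u$ lying on it, so that $d(w,v') = d(w,u) - 1$. Since $v$ and $v'$ both belong to $N(u)$ and $N(u)$ induces a complete graph, either $v' = v$ or $v'v \in E$; in the first case $d(w,v) = d(w,u) - 1$, and in the second $d(w,v) \le d(w,v') + 1 = d(w,u)$. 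Either way $d(w,v) \le d(w,u)$, so $w$ fails to resolve $(v,e)$.

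Combining the two steps, $u$ is the unique vertex resolving the pair $(v,e)$, and hence $u$ lies in every mixed metric generator. The one place demanding care is this clique step: one must ensure the neighbor $v'$ of $u$ on the shortest path genuinely exists (which only requires $w \neq u$, already assumed) and then invoke completeness of $N(u)$ to compare $v$ with $v'$ through the triangle inequality. I expect the short case distinction $v' = v$ versus $v'v \in E$ to be the main, though not difficult, obstacle.
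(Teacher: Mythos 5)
Your proof is correct: the pair consisting of a neighbor $v$ of $u$ and the edge $uv$ is resolved by $u$ (distances $1$ versus $0$) and by no other vertex, since simpliciality forces $d(w,v)\le d(w,u)$ for every $w\neq u$, so every mixed metric generator must contain $u$. The paper itself only quotes this proposition from \cite{yer17} without proof, and your argument is essentially the standard one given there, so there is nothing further to compare.
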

\begin{cor}\mbox{\rm(\cite{yer17})} \label{edim7}
If $u$ is a vertex of degree 1 in a graph $G$, then $u$ belongs to every mixed metric generator for $G$.
\end{cor}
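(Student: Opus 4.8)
The plan is to obtain this statement as an essentially immediate consequence of Proposition~\ref{edim6}, by observing that a vertex of degree $1$ is a particular kind of extreme (simplicial) vertex. Recall that $u$ is extreme when its open neighborhood $N(u)$ induces a complete graph. If $\deg_u = 1$, then $N(u)$ is a single vertex, and the subgraph induced by one vertex is trivially complete (it is $K_1$). Hence $u$ is extreme, and Proposition~\ref{edim6} applies verbatim to give that $u$ lies in every mixed metric generator. So the first thing I would do is simply reduce the corollary to that proposition.

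For completeness, and to make the statement self-contained without reinvoking the general simplicial argument, I would also sketch a direct proof that exhibits a pair of elements of $V \cup E$ that only $u$ can resolve. Let $v$ denote the unique neighbor of $u$ and let $e = uv$ be the unique edge incident to $u$. The plan is to show that the pair consisting of the edge $e$ and the vertex $v$ is resolved by $u$ and by no other vertex, which forces $u$ into every mixed resolving set $S$. The key step is the distance identity: since $u$ has degree $1$, every walk from any $w \neq u$ to $u$ must traverse the edge $vu$ as its last edge, so $d(w,u) = d(w,v) + 1$. Consequently, for every $w \neq u$ we get $d(w,e) = \min(d(w,u), d(w,v)) = d(w,v)$, so such a $w$ assigns $e$ and $v$ the same coordinate and therefore fails to resolve them. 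On the other hand, $d(u,e) = \min(d(u,u), d(u,v)) = 0$ while $d(u,v) = 1$, so $u$ does resolve the pair $\{e, v\}$. Thus $u$ is the only vertex resolving this pair, and any mixed metric generator must contain $u$.

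I do not expect a genuine obstacle here, since the corollary is really just the degree-$1$ specialization of the preceding proposition. The only points requiring care are the boundary check at $w = u$ (where $d(u,e) = 0$ because $u$ is an endpoint of $e$) and the verification of the identity $d(w,u) = d(w,v)+1$ for all other $w$, which rests entirely on $v$ being the \emph{only} neighbor of $u$. Once these two routine facts are in place, the conclusion follows directly, whether one argues via Proposition~\ref{edim6} or via the explicit unresolvable pair $\{e, v\}$.
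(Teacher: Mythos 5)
Your proposal is correct and its primary route---observing that a degree-$1$ vertex has $N(u)$ inducing $K_1$, hence is extreme, and then invoking Proposition~\ref{edim6}---is exactly how the paper (following \cite{yer17}) positions this statement as a corollary of the simplicial-vertex proposition. Your supplementary direct argument, showing that the pair $\{uv, v\}$ is resolved only by $u$ via the identity $d(w,u)=d(w,v)+1$ for all $w\neq u$, is also sound and self-contained, but it is not needed beyond the one-line reduction.
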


\begin{pro}\mbox{\rm(\cite{kel18})} \label{edim2} Let $G$ be a connected graph and let $\triangle(G)$ be the maximum degree of $G$. Then \begin{equation}\label{Y04}\beta_E(G)\geq \lceil\log_2\triangle (G)\rceil.\end{equation}
\end{pro}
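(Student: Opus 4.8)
The plan is to exploit the fact that the edges incident to a single vertex are all very hard to tell apart, so that each resolving vertex can contribute only one bit of information toward separating them. First I would fix a vertex $v$ of maximum degree, so $\deg_v=\triangle(G)$, and let $e_1,\ldots,e_{\triangle(G)}$ denote the edges incident to $v$, writing $e_i=\{v,u_i\}$. The whole argument then rests on analyzing the possible values of $d(w,e_i)$ for an arbitrary vertex $w\in V$.

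The key step is the following observation. For any $w$ and any neighbor $u_i$ of $v$, the triangle inequality (applied along the edge $vu_i$) gives $|d(w,v)-d(w,u_i)|\le 1$, so $d(w,u_i)\in\{d(w,v)-1,\,d(w,v),\,d(w,v)+1\}$. Plugging this into the definition $d(w,e_i)=\min\bigl(d(w,v),d(w,u_i)\bigr)$ from \eqref{dis} yields
\begin{equation}\label{twovalues}
d(w,e_i)\in\{d(w,v)-1,\ d(w,v)\}.
\end{equation}
Thus, no matter which edge $e_i$ we pick, a single vertex $w$ assigns it one of only two distances. This is the crux: each resolving vertex partitions the $\triangle(G)$ incident edges into at most two classes, contributing effectively one binary coordinate.

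From here the bound follows by a counting (pigeonhole) argument. Let $S=\{w_1,\ldots,w_k\}$ be any edge resolving set with $k=|S|$. To every incident edge $e_i$ associate its distance vector $\bigl(d(w_1,e_i),\ldots,d(w_k,e_i)\bigr)$. By \eqref{twovalues}, each coordinate takes at most two values, so these vectors range over a set of size at most $2^{k}$. Since $S$ resolves all edges, the $\triangle(G)$ vectors associated with $e_1,\ldots,e_{\triangle(G)}$ must be pairwise distinct, forcing $\triangle(G)\le 2^{k}$, i.e. $k\ge\log_2\triangle(G)$. As $k$ is an integer this gives $k\ge\lceil\log_2\triangle(G)\rceil$, and because this holds for every edge resolving set, in particular for a minimum one, we conclude $\beta_E(G)\ge\lceil\log_2\triangle(G)\rceil$.

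I expect the only genuinely delicate point to be establishing \eqref{twovalues}; once the two-value restriction is secured, the counting step is routine. The subtlety to watch is making sure the bound $|d(w,v)-d(w,u_i)|\le1$ is justified correctly (it is the standard one-Lipschitz property of graph distance along an edge) and that the $\min$ in the edge-distance definition genuinely collapses the three possible values of $d(w,u_i)$ down to two, rather than leaving three distinct outcomes.
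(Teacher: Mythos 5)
Your proof is correct. The paper states this proposition without proof (it is quoted from \cite{kel18}), but your argument --- collapsing $d(w,e_i)$ to the two values $d(w,v)-1$ and $d(w,v)$ for the edges $e_i$ incident to a fixed maximum-degree vertex $v$, and then counting at most $2^{|S|}$ distinct distance vectors --- is the standard one, and it is exactly the device the paper itself uses to prove its Theorem~\ref{lb1}.
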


\begin{thm}\label{lb2} \mbox{\rm(\cite{fil19})} \label{edim1} Let $G$ be a connected graph, then \begin{equation}\label{Y02}\beta_E(G) \geq 1 + \lceil \log_2 \delta(G) \rceil. \end{equation}\end{thm}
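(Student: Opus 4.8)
The plan is to fix an edge metric generator $S=\{w_1,\dots,w_k\}$ of minimum size $k=\beta_E(G)$ and to extract the bound from the behaviour of the edges incident to a single, cleverly chosen element of $S$ \emph{itself}. This is exactly where the extra ``$+1$'' over the naive logarithmic estimate will come from: a resolving vertex is essentially blind to the edges attached to it, so it cannot help separate them, and one coordinate is thereby sacrificed.

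First I would pick any $w_1\in S$ and list the edges incident to it, say $f_s=w_1x_s$ for $s=1,\dots,m$, where $m=deg_{w_1}\ge\delta(G)$ since $\delta(G)$ is the minimum degree. The first key observation is that $d(w_1,f_s)=\min(d(w_1,w_1),d(w_1,x_s))=\min(0,1)=0$ for every $s$; hence $w_1$ assigns the same coordinate $0$ to all of its incident edges and resolves no pair among them. Consequently, these $m$ edges must be pairwise resolved using only the remaining $k-1$ vertices $w_2,\dots,w_k$.

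The second step bounds how much those remaining vertices can do. For $j\ge 2$ and any $s$, since $x_s$ is adjacent to $w_1$ we have $|d(w_j,x_s)-d(w_j,w_1)|\le 1$, and because $d(w_j,f_s)=\min(d(w_j,w_1),d(w_j,x_s))$ is a minimum taken against $d(w_j,w_1)$, it follows that
\begin{equation*}
 d(w_j,f_s)\in\{\,d(w_j,w_1)-1,\ d(w_j,w_1)\,\}.
\end{equation*}
Thus, relative to the fixed offsets $d(w_j,w_1)$, the restriction of the edge code of $f_s$ to the coordinates $j=2,\dots,k$ is a binary vector in $\{0,1\}^{k-1}$. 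Since $f_1,\dots,f_m$ are pairwise resolved by $w_2,\dots,w_k$ alone, these $m$ binary vectors must be distinct, forcing $m\le 2^{k-1}$. Combining with $m\ge\delta(G)$ gives $2^{k-1}\ge\delta(G)$, and since $k-1$ is an integer this yields $k-1\ge\lceil\log_2\delta(G)\rceil$, i.e. $\beta_E(G)=k\ge 1+\lceil\log_2\delta(G)\rceil$.

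I expect the main obstacle to be conceptual rather than computational: recognizing that the improvement must be charged to a vertex of the generator rather than to a vertex of minimum degree. This mirrors the argument behind Proposition~\ref{edim2}, where binary incident-edge codes give $\beta_E(G)\ge\lceil\log_2\triangle(G)\rceil$ using all $k$ coordinates; the difference here is that anchoring the count at a resolving vertex sacrifices one coordinate (the all-zero column of $w_1$) but in return lets us measure against $\delta(G)$, producing the sharper additive constant. The only points needing care are the two-valuedness of the remaining coordinates (which relies on the minimum being taken against $d(w_j,w_1)$, ruling out the value $d(w_j,w_1)+1$) and the observation that $deg_{w_1}\ge\delta(G)$ holds automatically for \emph{any} choice of $w_1\in S$.
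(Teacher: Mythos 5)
Your proof is correct. The paper states this theorem without proof (citing \cite{fil19}), but your argument — anchoring at a vertex $w_1$ of the generator, noting all its incident edges get coordinate $0$ from $w_1$ and only two possible values $d(w_j,w_1)-1$ or $d(w_j,w_1)$ from each other $w_j$, hence $\delta(G)\le \deg_{w_1}\le 2^{k-1}$ — is essentially the same counting technique the paper itself uses to prove its Theorem~\ref{lb1}, which strengthens this bound by also counting the vertex $x$ among the items to be resolved.
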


\section{New general lower bounds}

First, we will introduce a lower bound for mixed metric dimension of any connected graph $G$, which
slightly improves the lower bound for edge metric dimension from Theorem \ref{edim1} given in \cite{fil19}.

\begin{thm} \label{lb1} Let $G$ be a connected graph and let $x$ be an arbitrary vertex from mixed resolving set $S$ of graph $G$. Then,
$|S| \geq 1 + \lceil \log_2 (1+\deg_x) \rceil$.\end{thm}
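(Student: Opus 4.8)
The plan is to isolate a small, carefully chosen family of $1 + \deg_x$ mixed elements that the vertex $x$ itself cannot separate, and then show that resolving them forces $S$ to be large. Write $\deg_x = k$, let $u_1, \dots, u_k$ be the neighbors of $x$, and let $e_i = x u_i$ be the incident edges. I would work with the $k+1$ elements $x, e_1, \dots, e_k \in V \cup E$. The first thing to record is that the chosen vertex $x \in S$ is useless for separating these: since $d(x,x) = 0$ and $d(x, e_i) = \min(d(x,x), d(x,u_i)) = 0$ for every $i$, the coordinate contributed by $x$ equals $0$ for all $k+1$ elements. Hence they must all be resolved by the remaining $|S| - 1$ vertices of $S$.

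The second step is the key distance estimate. For any $w \in S \setminus \{x\}$ set $d_w = d(w,x) \geq 1$. Because $u_i$ is adjacent to $x$, the triangle inequality gives $|d(w, u_i) - d_w| \leq 1$, so $d(w, e_i) = \min(d_w, d(w,u_i))$ can only take one of the two values $d_w - 1$ or $d_w$. This lets me define, for each edge $e_i$ and each $w \in S \setminus \{x\}$, a bit $b_i(w) = d_w - d(w, e_i) \in \{0, 1\}$, and collect these into a binary vector $\beta_i \in \{0,1\}^{|S|-1}$; for the vertex $x$ the analogous vector is the all-zero vector $\mathbf{0}$, since $d(w,x) - d(w,x) = 0$.

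The third step translates \textquotedblleft $S$ resolves these elements\textquotedblright\ into a statement about these vectors. Comparing two edges $e_i, e_j$ through a fixed $w$, their coordinates $d_w - b_i(w)$ and $d_w - b_j(w)$ differ exactly when $b_i(w) \neq b_j(w)$; comparing $e_i$ with $x$, the coordinates $d_w - b_i(w)$ and $d_w$ differ exactly when $b_i(w) = 1$. Therefore the $k+1$ elements receive pairwise distinct coordinate vectors over $S$ if and only if the $k+1$ vectors $\mathbf{0}, \beta_1, \dots, \beta_k$ are pairwise distinct. Since they live in $\{0,1\}^{|S|-1}$, which has only $2^{|S|-1}$ elements, resolvability forces $2^{|S|-1} \geq k + 1 = 1 + \deg_x$. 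Taking base-$2$ logarithms and using that $|S| - 1$ is an integer gives $|S| - 1 \geq \lceil \log_2(1 + \deg_x) \rceil$, which is the claim.

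The main obstacle I anticipate is not a hard computation but getting the reduction exactly right: one must ensure that for a given $w$ the \emph{same} base value $d_w$ governs the coordinate of $x$ and of every $e_i$, so that \textquotedblleft resolving\textquotedblright\ cleanly becomes \textquotedblleft the binary vectors differ,\textquotedblright\ and one should double-check the boundary case $d_w = 1$ (where $d(w,e_i) \in \{0,1\}$) as well as the fact that $x$ itself must genuinely be counted among the resolved elements. Once the equivalence between resolvability and distinctness of the $k+1$ binary vectors is secured, the counting argument and the logarithm are immediate.
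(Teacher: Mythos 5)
Your proof is correct and follows essentially the same route as the paper: both arguments consider the $1+\deg_x$ elements $x, e_1,\dots,e_{\deg_x}$, observe that the coordinate from $x$ is $0$ for all of them while each coordinate from any other $w\in S$ is confined to the two-element set $\{d(w,x)-1,\,d(w,x)\}$, and conclude $2^{|S|-1}\geq 1+\deg_x$ by counting the possible representations. Your explicit encoding via binary vectors is just a cleaner formalization of the paper's counting step.
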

\begin{proof}
Without loss of generality we can assume that $S = \{x, w_2, ..., w_p\}$.
Vector of metric coordinates of vertex $x$  with respect
to $S$ is $r(x,S) = (0,d_2,...,d_p)$, where $d_i = d(w_i,x)$, for all $i, 2 \le i \le p$.
Vertex $x$ is incident to $deg_x$ edges.
Name them as $e_1$, ..., $e_{deg_x}$.
For each position $i=2,...,p$ in the ordering of $S$ and each index $j=1,\ldots,deg_x$, edge $e_j$ is incident to vertex $x$, so by the definition
of $d(e_j,w_i)$ it directly implies $d(e_j,w_i) \in \{d_i-1, d_i\}$, i.e.
there can be only two different possible distances.
Therefore vertex $x$ and edges $e_1$, ..., $e_{deg_x}$ have
at most $2^{p-1}$ different mixed metric representations with respect to $S$,
implying $1+deg_x \leq 2^{p-1} \Rightarrow p \geq 1 + \log_2 (1+\deg_x)$.
Having in mind that $p=|S|$ is an integer, we have
$|S| \geq 1 + \lceil \log_2 (1+\deg_x) \rceil$.
\end{proof}

The following corollary describes vertices which can not be member of mixed resolving sets:

\begin{cor} \label{lb1rs} Let $G$ be a connected graph and let $v$ be an arbitrary vertex $v \in V$.
If $deg_v > 2^{\beta_M(G)-1} - 1$ then $v$ is not a member of any mixed resolving set $S$ of cardinality $\beta_{M}(G)$ of graph $G$.\end{cor}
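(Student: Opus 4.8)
The plan is to derive the statement directly from Theorem \ref{lb1} by a short contrapositive argument, since the corollary is essentially a reformulation of that lower bound solved for $\deg_v$. Concretely, I would suppose for contradiction that $v$ does belong to some mixed resolving set $S$ with $|S| = \beta_M(G)$, and then show that the degree hypothesis $\deg_v > 2^{\beta_M(G)-1} - 1$ forces $|S|$ to exceed $\beta_M(G)$, which is impossible.

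First I would invoke Theorem \ref{lb1} with the distinguished vertex taken to be $x = v$; this is legitimate precisely because the contradiction hypothesis places $v$ inside the resolving set $S$. That application yields
\begin{equation}
\beta_M(G) = |S| \geq 1 + \lceil \log_2(1 + \deg_v) \rceil .
\end{equation}
Next I would unpack the degree assumption: $\deg_v > 2^{\beta_M(G)-1} - 1$ is equivalent to $1 + \deg_v > 2^{\beta_M(G)-1}$, and taking base-$2$ logarithms gives $\log_2(1 + \deg_v) > \beta_M(G) - 1$. Since $\beta_M(G) - 1$ is an integer and the left-hand side strictly exceeds it, the ceiling satisfies $\lceil \log_2(1 + \deg_v) \rceil \geq \beta_M(G)$, whence $1 + \lceil \log_2(1 + \deg_v) \rceil \geq \beta_M(G) + 1$. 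Combining this with the displayed inequality gives $\beta_M(G) \geq \beta_M(G) + 1$, the desired contradiction; therefore no mixed resolving set of cardinality $\beta_M(G)$ can contain $v$.

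The only delicate point — and it is minor rather than a genuine obstacle — is the interaction between the strict inequality and the ceiling function: one must use that a real number strictly greater than an integer $k$ has ceiling at least $k+1$, so that the strictness in $\log_2(1+\deg_v) > \beta_M(G)-1$ is not lost when passing to $\lceil \cdot \rceil$. Everything else is a routine chain of equivalences, so I would keep the write-up terse and simply cite Theorem \ref{lb1} for the substantive inequality.
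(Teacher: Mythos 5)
Your argument is correct and is exactly the intended derivation: the paper states this corollary without proof as an immediate consequence of Theorem \ref{lb1}, and your contrapositive application of that theorem with $x=v$, together with the observation that a real number strictly exceeding the integer $\beta_M(G)-1$ has ceiling at least $\beta_M(G)$, is precisely the argument the authors leave implicit. No gaps.
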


Since $\delta(G)$ is the minimum degree of vertices in $G$ we have another corollary of Theorem \ref{lb1}:

\begin{cor} \label{lb1de} Let $G$ be a connected graph, then \begin{equation}\label{Nlb1}\beta_M(G) \geq 1 + \lceil \log_2 (\delta(G)+1) \rceil.
\end{equation}
\end{cor}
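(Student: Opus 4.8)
The plan is to derive this directly from Theorem~\ref{lb1} by instantiating it at an arbitrary basis vertex and then invoking the monotonicity of the bound. First I would fix a mixed metric basis $S$, that is, a mixed resolving set of minimum cardinality, so that $|S| = \beta_M(G)$. Since $G$ is connected, such a basis exists and is nonempty (indeed the mixed metric dimension is at least $2$), so I may select an arbitrary vertex $x \in S$ to which Theorem~\ref{lb1} can be applied.

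Applying Theorem~\ref{lb1} to this vertex $x$ immediately gives $\beta_M(G) = |S| \geq 1 + \lceil \log_2(1 + \deg_x) \rceil$. It then remains only to replace $\deg_x$ by the minimum degree $\delta(G)$. The key observation is that $\deg_x \geq \delta(G)$ by the very definition of $\delta(G)$ as the minimum vertex degree in $G$, and that the map $t \mapsto 1 + \lceil \log_2(1+t) \rceil$ is nondecreasing, since both $\log_2$ and the ceiling function are nondecreasing. Consequently $1 + \lceil \log_2(1 + \deg_x) \rceil \geq 1 + \lceil \log_2(1 + \delta(G)) \rceil$, and chaining this with the previous inequality yields the claimed bound $\beta_M(G) \geq 1 + \lceil \log_2(\delta(G)+1) \rceil$.

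There is essentially no substantive obstacle here, as the statement is a direct specialization of Theorem~\ref{lb1}; the only point requiring care is the logical direction of the degree comparison. Because Theorem~\ref{lb1} holds for \emph{every} vertex of the basis, one is free to retain the weakest such consequence, which is obtained precisely by lower-bounding the degree by $\delta(G)$. In particular, no special choice of a minimum-degree basis vertex is needed: any $x \in S$ suffices, since the monotonicity step already relaxes the per-vertex bound down to the $\delta(G)$-level.
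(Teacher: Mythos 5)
Your proposal is correct and follows exactly the route the paper intends: the corollary is a direct specialization of Theorem~\ref{lb1}, obtained by applying it to any vertex $x$ of a minimum mixed resolving set and using $\deg_x \geq \delta(G)$ together with the monotonicity of $t \mapsto 1 + \lceil \log_2(1+t)\rceil$. The paper gives no further argument beyond this observation, so there is nothing to add.
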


\begin{exm} This lower bound from Corollary $\ref{lb1rs}$, for the graph from the Figure  $\ref{smallgraph}$,  can be easily calculated. Since, the minimum degree of vertices $\delta(G)=2$ and by replacing in inequality ($\ref{Nlb1}$), the lower bound is  equal 3.
\end{exm}

For regular graphs, the following corollary holds:

\begin{cor}  Let $G$ be an $r$-regular graph, then \begin{equation}\beta_M(G) \geq 1 + \lceil \log_2 (r+1) \rceil.\end{equation}\end{cor}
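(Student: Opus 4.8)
The plan is to derive this as an immediate specialization of Corollary \ref{lb1de}. That corollary gives the bound $\beta_M(G) \geq 1 + \lceil \log_2(\delta(G)+1) \rceil$ in terms of the minimum degree $\delta(G)$, so the only thing to observe is what $\delta(G)$ becomes for an $r$-regular graph.

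First I would recall the definition of $r$-regularity: every vertex $v \in V$ satisfies $\deg_v = r$. Consequently the minimum degree satisfies $\delta(G) = \min\{\deg_v \mid v \in V\} = r$. Substituting $\delta(G) = r$ directly into inequality \eqref{Nlb1} from Corollary \ref{lb1de} yields
\begin{equation}
\beta_M(G) \geq 1 + \lceil \log_2(\delta(G)+1) \rceil = 1 + \lceil \log_2(r+1) \rceil,
\end{equation}
which is exactly the claimed bound.

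There is essentially no obstacle here: the statement is a one-line substitution into an already-established corollary, and the entire content lies in recognizing that regularity forces $\delta(G) = r$. The only thing one must take care of is that Corollary \ref{lb1de} applies to any connected graph $G$, so it is worth noting (or tacitly assuming, consistent with the standing convention stated in the introduction) that the $r$-regular graph under consideration is connected, which is precisely the hypothesis needed to invoke the earlier result.
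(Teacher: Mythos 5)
Your proof is correct and matches the paper's (implicit) argument exactly: the paper states this corollary as an immediate consequence of Corollary \ref{lb1de}, obtained by substituting $\delta(G)=r$ for an $r$-regular graph. Your remark about the standing connectivity assumption is a reasonable bit of extra care and is consistent with the paper's conventions.
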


We will now define the hitting set $H$. For a given set $U$ and a collection $\mathcal{T}$ of subsets $S_1,...,S_m$ of $U$ such that their union is equal to $U$, the hitting set $H$ is a set which has a nonempty intersection with each set from this collection, i.e. $(\forall i\in \{1,...,m\})\, H\cap S_i\neq \emptyset$. Finding a hitting set of minimum cardinality is called the hitting set problem. This problem is equivalent to the set covering problem, because for the given set $U$ and collection of subsets $S_1,...,S_m$, the set covering problem is finding  subcollection $\mathcal{C} \subseteq \mathcal{T}$ of minimum cardinality  whose union is $U$.

For an arbitrary edge $uv$ we define sets $V_{<,uv}=\{w \in V| d(u,w)<d(v,w)\}$ and  $V_{>,uv}=\{w \in V| d(u,w)>d(v,w)\}$. The relationship between these sets and mixed resolving sets are given in the following lemma.

\begin{lem} \label{caes}
Let $G$ be a connected graph, $e\in E$ an arbitrary edge and $S$ a mixed resolving set, then
 \begin{itemize}
  \item [a)] $V_{>,e}\cap S\neq 0;$
   \item [b)] $V_{<,e}\cap S\neq 0$.

  \end{itemize}
\end{lem}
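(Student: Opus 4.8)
The plan is to exploit the defining property of a mixed resolving set directly against the two endpoints of the edge. Write $e = uv$. For part a), I would consider the pair consisting of the edge $e$ and its endpoint $u$, both of which are elements of $V \cup E$. Since $S$ is a mixed resolving set, some vertex $w \in S$ must resolve this pair, that is, $d(w,e) \neq d(w,u)$.

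The key step is then to translate this inequality of distances into the strict distance asymmetry defining $V_{>,e}$. Recall from \eqref{dis} that $d(w,e) = \min\bigl(d(w,u), d(w,v)\bigr)$, so in particular $d(w,e) \le d(w,u)$. Combined with $d(w,e) \neq d(w,u)$ this forces $d(w,e) < d(w,u)$, which is only possible if $d(w,v) < d(w,u)$. Equivalently $d(u,w) > d(v,w)$, so $w \in V_{>,e}$, and hence $V_{>,e} \cap S \neq \emptyset$, establishing a).

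For part b) I would repeat the argument symmetrically, now resolving the pair consisting of the edge $e$ and its other endpoint $v$: some $w' \in S$ satisfies $d(w',e) \neq d(w',v)$, and the same reasoning with $d(w',e) = \min\bigl(d(w',u), d(w',v)\bigr) \le d(w',v)$ yields $d(w',u) < d(w',v)$, i.e. $w' \in V_{<,e}$, so $V_{<,e} \cap S \neq \emptyset$.

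I do not anticipate a genuine obstacle here; the only subtlety worth stating carefully is the observation that an edge can be distinguished from one of its own endpoints \emph{only} by a vertex that lies strictly closer to the opposite endpoint, which is precisely what makes $V_{>,e}$ and $V_{<,e}$ the relevant sets. Once that is phrased cleanly, both parts follow immediately and in parallel.
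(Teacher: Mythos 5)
Your proof is correct and takes essentially the same approach as the paper: the paper argues by contradiction that $V_{>,e}\cap S=\emptyset$ would force $r(u,S)=r(uv,S)$, while you state the contrapositive directly, observing that any $w\in S$ resolving the pair $\{u,uv\}$ must satisfy $d(w,v)<d(w,u)$ and hence lie in $V_{>,e}$. The key fact used --- that $d(w,uv)=d(w,u)$ whenever $d(w,u)\le d(w,v)$ --- is identical in both arguments.
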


\begin{proof}

a) Suppose the opposite, i.e. $(\exists uv\in E)$ so that $V_{>,uv}\cap S=\emptyset$, which means that for each vertex $w$ holds $ d(u,w)\leq d(v,w)$. According to equality (\ref{dis}), it is easy to see that mixed metric coordinate of vertex $u$ is same as the mixed metric coordinate of edge $uv$, i.e. $r(u,S)=r(uv,S)$, which means that $S$ is not a mixed resolving set which is contradiction to the starting assumption.

b) The proof of this part of lemma is analogous to the proof of part a).
\end{proof}

For each edge $e\in E$, we assign sets $V_{<,e}$ and $V_{>,e}$ and it is easy to see that there are $2m$ sets. Our idea is to find a minimal hitting set $H^{*}$ for the family of sets $\{V_{>,e},V_{<,e}|e\in E\}$. The cardinality of minimum hitting set of this family of sets will be denoted as $MHSP(\{V_{<,e},V_{>,e}|e\in E\})$.

In the following theorem the relationship between mixed metric dimension and the minimum hitting set problem will be established.

\begin{thm} \label{lb1reg}For any connected graph $G$, it holds \begin{equation}\label{Nlb2}\beta_M (G)\geq MHSP(\{V_{<,e},V_{>,e}|e\in E\}). \end{equation}

\end{thm}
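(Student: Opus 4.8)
The plan is to show that any mixed resolving set $S$ is itself a hitting set for the family $\{V_{<,e}, V_{>,e} \mid e \in E\}$, which immediately forces $|S|$ to be at least the cardinality of a \emph{minimum} hitting set. Since this holds for every mixed resolving set, in particular it holds for a mixed metric basis, and taking $|S| = \beta_M(G)$ yields the desired inequality.

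\medskip

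The key observation is that Lemma \ref{caes} already does essentially all the work. First I would fix an arbitrary mixed resolving set $S$ of $G$. By part a) of Lemma \ref{caes}, for every edge $e \in E$ we have $V_{>,e} \cap S \neq \emptyset$, and by part b), $V_{<,e} \cap S \neq \emptyset$. These two facts say precisely that $S$ meets every set in the family $\{V_{<,e}, V_{>,e} \mid e \in E\}$, so by the definition of a hitting set, $S$ is a hitting set for this family. Consequently $|S| \geq MHSP(\{V_{<,e}, V_{>,e} \mid e \in E\})$, because the minimum hitting set problem returns the smallest possible cardinality of any hitting set and $S$ is a competitor.

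\medskip

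To conclude, I would simply specialize $S$ to be a mixed metric basis, i.e.\ a mixed resolving set of minimum cardinality $\beta_M(G)$. The chain of inequalities then reads $\beta_M(G) = |S| \geq MHSP(\{V_{<,e}, V_{>,e} \mid e \in E\})$, which is exactly inequality (\ref{Nlb2}). One small point worth checking is that the family is genuinely a valid instance of the hitting set problem, namely that the union of all the $V_{<,e}$ and $V_{>,e}$ covers the universe over which the hitting set is sought; but since $G$ is connected and simple, for any edge $uv$ the endpoint $u$ itself lies in $V_{<,uv}$ and $v$ lies in $V_{>,uv}$ (each endpoint is strictly closer to itself), so no degeneracy arises.

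\medskip

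I do not anticipate a genuine obstacle here: the entire content of the theorem has been front-loaded into Lemma \ref{caes}, and the proof is a one-line reduction to the definition of the minimum hitting set. The only thing requiring care is bookkeeping — making sure the quantifiers line up so that a \emph{single} set $S$ hits \emph{all} $2m$ sets simultaneously (which is what Lemma \ref{caes} guarantees, since it holds for every $e$ with the same fixed $S$), rather than needing a different resolving set for each edge.
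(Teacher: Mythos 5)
Your proof is correct and follows essentially the same route as the paper: both arguments observe that, by Lemma \ref{caes}, any mixed resolving set $S$ hits every set in the family $\{V_{<,e},V_{>,e}\mid e\in E\}$, hence is a hitting set, and then specialize to a minimum mixed resolving set to obtain $\beta_M(G)\geq MHSP(\{V_{<,e},V_{>,e}\mid e\in E\})$. Your additional check that each $V_{<,uv}$ and $V_{>,uv}$ is nonempty (via the endpoints themselves) is a harmless extra detail not present in the paper's proof.
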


\begin{proof}

Let $\beta_M(G)$ be the mixed metric dimension of an arbitrary graph $G$.  Then, there is a mixed resolving set $S$ so that $|S|=\beta_M (G).$ From Lemma $\ref{caes}$, for arbitrary edge $e$, it follows that

\begin{equation} \label{presek}
V_{<,e}\cap S\neq \emptyset \wedge V_{>,e}\cap S\neq \emptyset.
\end{equation}
This means that there is at least one element from $S$ in each of these sets for every edge $e$.

Let us now consider minimal hitting set problem over family of these sets $V_{<,e}$ and $V_{>,e}$ where $e \in E$. Since mixed resolving set $S$ satisfies (\ref{presek}), $S$ is a hitting set for a family of sets  $\{e \in E | V_{>,e},V_{<,e}\}$. The cardinality of each hitting set is greater or equal to the cardinality of minimal hitting set, so it can be concluded that inequality ($\ref{Nlb2}$) holds.
\end{proof}

\begin{exm}The lower bound from Theorem $\ref{lb1reg}$, for the graph from the Figure  $\ref{smallgraph}$, is obtained using total enumeration for hitting set problem and it is equal 5.\end{exm}

Let us present another lower bound for mixed metric dimension based on the diameter of a graph.

\begin{thm}\label{lb2reg}  Let $G=(V,E)$ be a connected graph with mixed metric dimension $\beta_M (G)$  and let $D$ be the diameter of graph $G$, then
 \begin{equation}\label{Y03}|V|+|E|\leq D^{\beta_M (G)}+\beta_M (G)(\bigtriangleup (G)+1). \end{equation}
\end{thm}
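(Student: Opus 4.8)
The plan is to fix a mixed resolving basis $S=\{w_1,\dots,w_p\}$ with $p=\beta_M(G)$ and exploit the fact that the mixed metric coordinate vectors $r(x,S)=(d(x,w_1),\dots,d(x,w_p))$ are pairwise distinct over all $x\in V\cup E$. Consequently $|V|+|E|$ equals the number of distinct such vectors, and the entire argument reduces to counting how many vectors can occur. A crude estimate allows each coordinate to take any value in $\{0,1,\dots,D\}$ and yields only $(D+1)^p$, which is too weak; the improvement comes from splitting $V\cup E$ according to whether the coordinate vector contains a zero entry.

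First I would count the elements whose vector has no zero coordinate. For such an element every coordinate lies in $\{1,\dots,D\}$, so the vector belongs to $\{1,\dots,D\}^p$; here I need that no coordinate exceeds $D$, which for a vertex is immediate from the definition of the diameter and for an edge $e=uv$ follows from $d(e,w_i)=\min(d(u,w_i),d(v,w_i))\le D$. Since these vectors are distinct, there are at most $D^p$ such elements.

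Next I would count the elements whose vector contains at least one zero. A coordinate $d(x,w_i)$ equals $0$ exactly when $x=w_i$ (if $x$ is a vertex) or when $x$ is an edge incident to $w_i$ (if $x$ is an edge). Hence the zero-bearing vertices are precisely the $p$ elements of $S$, while the zero-bearing edges are precisely those incident to some $w_i$, whose number is at most $\sum_{i=1}^{p}\deg_{w_i}\le p\,\bigtriangleup(G)$. Thus this class contributes at most $p+p\,\bigtriangleup(G)=p(\bigtriangleup(G)+1)$ elements.

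Adding the two bounds gives $|V|+|E|\le D^p+p(\bigtriangleup(G)+1)$, which is exactly inequality (\ref{Y03}) upon substituting $p=\beta_M(G)$. The main obstacle is conceptual rather than computational: one must notice that bounding all vectors uniformly by $(D+1)^p$ loses too much, and that the correct refinement is to isolate the vectors carrying a zero entry and bound their number through a vertex-and-edge incidence count against $S$. Verifying that the edges incident to $S$ number at most $p\,\bigtriangleup(G)$, and that the surviving coordinates are genuinely capped by $D$, are the points that require care.
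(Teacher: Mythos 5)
Your proposal is correct and follows essentially the same route as the paper's proof: both split $V\cup E$ according to whether the coordinate vector with respect to a mixed metric basis contains a zero entry, bound the zero-free class by $D^{\beta_M(G)}$, and bound the zero-bearing class by counting the basis vertices together with their incident edges, giving at most $\beta_M(G)(\bigtriangleup(G)+1)$. Your write-up is in fact somewhat more explicit than the paper's about why each count holds, but the underlying argument is identical.
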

\begin{proof}
We will consider all possible representations of metric coordinates for  all items of the graph $G$. Since the diameter of graph is $D$, then it is easy to see that each item of graph can have integer coordinates between 0 and $D$. The set of all items can be separated into two disjunctive classes:
\begin{itemize}
  \item [$I)$] items whose metric coordinates do not have 0;
  \item [$II)$] items whose metric coordinates have 0.
\end{itemize}

 Each item from $I)$ class, which does not have a coordinate equal 0, must have a unique coordinates from one of $D^{\beta_M (G)}$ possibilities. For items from $II)$, i.e. with one coordinate equal to zero, it is easy to see that it will be the vertex which is an element of the basis,  or edge containing that vertex. Hence, for each element of the basis, there are at the most $\triangle (G)+1$ possibilities, i.e. it must have a unique coordinate from one of $\beta_M (G)(\triangle (G)+1)$ possibilities. Therefore, from the previous, it is easy to conclude that inequality $(\ref{Y03})$ follows, thus completing the proof of theorem.

\end{proof}

\begin{exm}The lower bound from Theorem $\ref{lb2reg}$, for the graph from the Figure  $\ref{smallgraph}$, is obtained calculating
inequalities ($\ref{Y03}$) and it is equal 2.
\end{exm}

\section{Exact results on torus graph}

In this section we will use previously introduced general lower bounds to obtain the exact values of mixed metric dimension of torus graph.

\begin{thm} For $m,n \geq 3$ it holds $\beta_M(T_{m,n}) = 4$.\end{thm}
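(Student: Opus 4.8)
The plan is to prove the two inequalities $\beta_M(T_{m,n}) \geq 4$ and $\beta_M(T_{m,n}) \leq 4$ separately. Throughout I model the torus as the Cartesian product $T_{m,n} = C_m \,\square\, C_n$, so that its vertex set is $\mathbb{Z}_m \times \mathbb{Z}_n$ and the distance factors as $d((x,y),(x',y')) = d_m(x,x') + d_n(y,y')$, where $d_m(x,x') = \min(|x-x'|, m-|x-x'|)$ is the cyclic distance and likewise for $d_n$. The graph is $4$-regular and has $mn$ vertices and $2mn$ edges. For the lower bound I would simply invoke Corollary~\ref{lb1de}: since every vertex has degree $4$ we have $\delta(T_{m,n}) = 4$, and therefore
\[
\beta_M(T_{m,n}) \geq 1 + \lceil \log_2(\delta(T_{m,n})+1) \rceil = 1 + \lceil \log_2 5 \rceil = 1 + 3 = 4.
\]
This is precisely the step where the $4$-regularity is used, and it is what makes the value exactly $4$ achievable from below.

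For the upper bound I would exhibit an explicit set $S$ of four vertices and verify that it resolves every pair of elements of $V \cup E$. Because distances split across the two cyclic coordinates, the representation of a vertex $(x,y)$ reads $(d_m(x,a_i) + d_n(y,b_i))_{i=1}^{4}$ for landmarks $(a_i,b_i)$, while a horizontal edge $\{(x,y),(x+1,y)\}$ has $i$-th coordinate $d_n(y,b_i) + \min(d_m(x,a_i),\, d_m(x+1,a_i))$, with the symmetric statement for vertical edges. Thus the whole verification reduces to understanding sums and minima of one-dimensional cyclic distances. The landmarks must be chosen so that their first coordinates resolve the cyclic $x$-position, their second coordinates resolve the cyclic $y$-position, and the two directions are genuinely decoupled; a cluster of nearby landmarks will not do, since then a difference such as $d_m(x,a_i)-d_m(x,a_j)$ stays nearly constant and fails to pin down $x$. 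I would therefore spread the four landmarks around the torus --- two sharing a row and two sharing a column, at well-separated positions --- so that the four combined coordinates recover both $x$ and $y$ and, at the same time, separate each edge from its endpoints and from every other edge.

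The verification then splits into the three usual families of pairs --- vertex/vertex, vertex/edge, and edge/edge --- each refined by whether the edges involved are horizontal or vertical, and I expect this casework to be the main obstacle. The delicate feature is the cyclic $\min$ in $d_m$ and $d_n$: wrap-around produces pairs of items that are reflections of one another and hence agree on several coordinates, and it is exactly these symmetric pairs that a poorly placed landmark set leaves unresolved. Consequently the argument must track sub-cases according to the parities of $m$ and $n$ (antipodal vertices behave differently on even versus odd cycles) and will likely require separate attention to the smallest cases $m=3$ or $n=3$, where several ``minimum'' branches collapse. Once the chosen $S$ is shown to distinguish every such reflected pair, combining the lower and upper bounds yields $\beta_M(T_{m,n}) = 4$.
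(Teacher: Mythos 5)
Your lower bound is complete and is exactly the paper's argument: the torus is $4$-regular, so Corollary~3 (equivalently the regular-graph corollary of Theorem~2) gives $\beta_M(T_{m,n}) \geq 1 + \lceil \log_2 5 \rceil = 4$. Your setup for the upper bound is also the right one, and your observation that the distance splits as $d_m(x,a)+d_n(y,b)$ and that an edge's coordinate is the minimum over its two endpoints is correct; the paper likewise handles the four parity classes of $(m,n)$ separately and exploits the $C_m \,\square\, C_n \cong C_n \,\square\, C_m$ symmetry to reduce one case to another.

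The genuine gap is that you never actually produce a four-vertex mixed resolving set, nor verify one. Everything after ``I would exhibit an explicit set $S$'' is a description of what a verification would look like, not a verification: you do not name the landmarks, and you do not check that any concrete choice separates all vertex--vertex, vertex--edge, and edge--edge pairs. Since the upper bound is the substantive half of the theorem (the lower bound is a one-line corollary), the claim $\beta_M(T_{m,n}) \leq 4$ remains unproved in your write-up. The paper closes this by giving explicit sets in each parity case --- for instance $S = \{(0,0),(0,l),(1,l+1),(k+1,l+1)\}$ when $m=2k+1$, $n=2l+1$, and $S = \{(0,0),(0,1),(1,l),(k,0)\}$ when $m=2k$, $n=2l$ --- and tabulating the mixed metric representation of every vertex and every edge to confirm they are pairwise distinct. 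Your heuristic that the landmarks should be ``spread out, two sharing a row and two sharing a column'' is consistent with these choices, but a heuristic for where to place landmarks is not a proof that the placement works; the wrap-around reflections you correctly identify as the danger must actually be checked against a specific $S$, case by case in the parities of $m$ and $n$.
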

\begin{proof}  \textbf{\underline{Step 1}:}  {\em Upper bound is 4}.  \\

There are four cases:\\
\textbf{Case 1.} $m=2k+1, n=2l+1$\\
Let $S = \{(0,0), (0,l), (1,l+1), (k+1,l+1)\}$. Let us prove that $S$ is mixed metric resolving set. The representation of coordinates of each vertex and each edge,with respect to $S$, is shown in the Table \ref{vtor2} and Table \ref{etor2}.

\begin{table}
\tiny
\begin{center}
\caption{  Metric coordinates of vertices of $T_{2k+1,2l+1}$}
\label{vtor2}
\begin{tabular}{|c|c|c|}
 \hline
  vetex & cond. & $r(v,S)$\\
 \hline
$(0,0)$ &   & ($0, l, l+1, l+k$) \\
$(i,0)$ &  $1\leq i \leq k$ & ($i,i+l,l+i-1,l+k-i+1$) \\
$(0,j)$ &  $1 \leq j \leq l$ & ($j,l-j, l-j+2, l-j+k+1$) \\
$(i,j)$ & $1\leq i \leq k-1$  &($j+i,i+l-j,l-j+i,l-j+k-i+2$)\\
&$1\leq j \leq l$  &  \\
$(0,j)$  &  $l+1 \leq j \leq n-1$ & ($n-j,j-l,j-l,j-l+k-1$) \\

$(i,j)$ & $l+1\leq j \leq n-1$  &($n-j+i,j-l+i,j-l+i-1,k-i+j-l$)\\
&$1\leq i \leq k-1$  &  \\

$(i,0)$ &  $k+2 \leq i \leq m-1$ & ($m-i, m-i+l, m-i+l+1, l-k+i-1$) \\

$(i,j)$ & $k+2\leq i \leq m-1$  &($m-i+j,m-i+l-j,m-i+l-j+2,i-k+l-j$)\\
&$1\leq j \leq l$  &  \\

$(i,j)$ & $k+2\leq i \leq m-1$  &($m+n-i-j,j-l+m-i,,$\\
&$l+1\leq j \leq n-1$  & $j-l-i+m,j-l+i-k-2)$ \\

$(k+1,0)$ &  & ($m-i, m-i+l, k+l,l$) \\
$(k,j)$ &  $1 \leq j \leq l$ & ($k+j,k+l-j,k+l-j,l-j+2$) \\
$(k+1,j)$ &  $1 \leq j \leq l$ & ($k+j, k+l-j, k+l-j+1, l-j+1$) \\
$(k,j)$ &  $l+1 \leq j \leq n-1$ & ($k+n-j, j-l+k, k+j-l-2, j-l$) \\
$(k+1,j)$ &  $l+1 \leq j \leq n-1$ & ($k+n-j,k+j-l,k+j-l-1,j-l-1$) \\
	  \hline
\end{tabular}
\end{center}
\end{table}

\begin{table}
\tiny
\begin{center}
\caption{  Metric coordinates of edges of $T_{2k+1,2l+1}$}
\label{etor2}
\begin{tabular}{|c|c|c|}

 \hline
  edge & cond. & $r(e,S)$\\
 \hline

 $(0,0)(1,0)$ &  & ($0,l,l,k+l$) \\
 $(0,0)(0,n-1)$ &  & ($0,l,l+1,k+l-1$) \\
 $(0,0)(m-1,0)$ &  & ($0,l,l+1,l+k-1$) \\
$(0,j)(0,j+1)$ & $0\leq j \leq l-1$   & ($j, l-j-1, l-j+1, k+l-j$) \\
$(i,0)(i+1,0)$ &  $1\leq i \leq k-1$ & ($i,l+i,l+i-1,k-i+l$) \\
$(i,j)(i,j+1)$ &  $1\leq i \leq k$ & ($i+j,l-j-1+i,l+i-j-1,l-j+k-i+1$) \\
&  $1\leq j \leq l-1$ &  \\
$(i,l)(i,l+1)$ &  $1\leq i \leq k$ & ($l+i,i,i-1,k-i+1$) \\
$(i,0)(i,1)$ &  $1\leq i \leq k$ & ($i,l+i-1,l+i-1,l+k-i+1$) \\
$(0,j)(1,j)$ &  $1\leq j \leq l$ & ($j,l-j,l-j+1,k+l-j+1$) \\
$(0,j)(0,j+1)$ &  $l+1\leq j \leq n-2$ & ($n-j-1,j-l,j-l,k+j-l-1$) \\
$(i,j)(i,j+1)$ &  $1\leq i \leq k$ & ($n-j+i-1,j-l+i,$ \\
&  $l+1\leq j \leq n-2$ & $j-l+i-2,k+j-l-i$) \\
$(i,j)(i+1,j)$ &  $1\leq i \leq k-1$ & ($i+j,l-j+i,l-j+i,l-j+k-i+1$) \\

&  $1\leq j \leq l$ &  \\
$(i,0)(i,n-1)$ &  $1\leq i \leq k$ & ($i,l+i,l+i-1,l+k-i$) \\
$(i,j)(i+1,j)$ &  $1\leq i \leq k-1$ & ($n-j+i,j-l+i,$ \\
&  $l+1\leq j \leq n-1$ &  $j-l+i-2,k-i+j-l-1$)\\
$(0,j)(1,j)$ &  $l+1\leq j \leq n-1$ & ($n-j,j-l,j-l-1,k+j-l-1$) \\
$(i,j)(i+1,j)$ &  $k+1\leq i \leq m-2$ & ($j+m-i-1,l-j+m-i-1,$ \\
&  $1\leq j \leq l$ & $m-i+l-j+1,l+1-j+i-k-1$)\\
$(i,j)(i,j+1)$ &  $k+2\leq i \leq m-1$ & ($m-i+j,l-j-1+m-i,$ \\
&  $1\leq j \leq l-1$ &$l-j+m-i,l-j+i-k-1$) \\
$(i,0)(i+1,0)$ &  $k+1\leq i \leq m-2$ & ($m-i-1,m-i+l-1,m-i+l,i-k-1+l$) \\
$(i,0)(i,1)$ &  $k+2\leq i \leq m-1$ & ($m-i,m-i+l-1,m-i+l+1,l+i-j-k-1$) \\
$(k,j)(k+1,j)$ &  $1\leq j \leq l$ & ($k+j,k+l-j,k+l-j,l-j+1$)\\
$(0,j)(m-1,j)$ &  $1\leq j \leq l$ & ($j,l-j,l-j+2,k+l-j$) \\
$(k+1,j)(k+1,j+1)$ &  $1\leq j \leq l-1$ & ($k+j,k+l-j-1,k+l-j,l-j$)\\
$(i,j)(i+1,j)$ &  $k+1\leq i \leq m-2$ & ($n-j+m-i-1,m-i+j-l-1,$\\
&  $l+1\leq j \leq n-1$ & $m-i+j-l-1,i-k+j-l-2)$ \\
$(i,j)(i,j+1)$ &  $k+2\leq i \leq m-1$ & ($n-j-1+m-i,j-l+m-i,$\\
&  $l+1\leq j \leq n-2$ & $j-l+m-i,j-l+i-k-2)$ \\
$(i,l)(i,l+1)$ &  $k+1\leq i \leq m-1$ & ($m-i+l,m-i,m-i+1,i-k-1)$\\
$(k,j)(k+1,j)$ &  $l+1\leq j \leq n-1$ & ($n-j+k,j-l+k,k+j-l-2,j-l-1)$\\
$(i,0)(i,n-1)$ &  $k+2\leq i \leq m-1$ & ($m-i,m-i+l,m-i+l,l+i-k-2)$\\
$(0,j)(m-1,j)$ &  $l+1\leq j \leq n-1$ & ($n-j,j-l,j-l,k+j-l-2)$\\
$(0,l)(0,l+1)$ &  & ($l,0,1,k)$\\
$(0,l)(m-1,l)$ &  & ($l,0,2,k)$\\
$(k,0)(k+1,0)$ &  & ($k,k+l,k+l-1,l)$\\
$(k+1,0)(k+1,1)$ &  & ($k,k+l-1,k+l,l)$\\	
$(k+1,0)(k+1,n-1)$ &  & ($k,k+l,k+l-1,l-1)$\\
$(k+1,j)(k+1,j+1)$ &  $l+1\leq j \leq n-2$ & ($n-j+k-1,j-l+k,j-l+k-1,j-l-1)$\\
$(k+1,l)(k+1,l+1)$ &  & ($k+l,k,k,0)$\\

\hline
\end{tabular}
\end{center}
\end{table}

\newpage

Since metric coordinates of all items are mutually different, $S$ is a mixed resolving set. Therefore, $\beta_{M}(T_{2k+1,2l+1})\leq 4.$

\textbf{Case 2.} $m=2k+1, n=2l$\\
Let $S = \{(0,0), (0,l), (1,0), (k+1,1)\}$. Let us prove that $S$ is mixed metric resolving set. The representation of coordinates of each vertex and each edge, with respect to $S$, is shown in the Table \ref{vtor7} and Table \ref{vtor6}.
\begin{table}
\tiny
\begin{center}
\caption{  Metric coordinates of vertices of $T_{2k+1,2l}$}
\label{vtor7}
\begin{tabular}{|c|c|c|}
 \hline
  vetex & cond. & $r(v,S)$\\
 \hline
$(0,0)$ &   & ($0, l, 1, k+1$) \\
$(i,0)$ &  $1\leq i \leq k$ & ($i,i+l,i-1,k-i+2$) \\
$(0,j)$ &  $1 \leq j \leq l$ & ($j,l-j, j+1, k+j-1$) \\
$(i,j)$ & $1\leq i \leq k$  &($j+i,l-j+i,j+i-1,j+k-i$)\\
&$1\leq j \leq l$  &  \\
$(0,j)$  &  $l+1 \leq j \leq n-1$ & ($n-j,j-l,n-j+1,n-j+k+1$) \\

$(i,j)$ & $l+1\leq j \leq n-1$  &($n-j+i,j-l+i,n-j+i-1,n-j+k-i+2$)\\
&$1\leq i \leq k$  &  \\

$(i,0)$ &  $k+2 \leq i \leq m-1$ & ($m-i, m-i+l, m-i+1, i-k$) \\

$(k+1,0)$ &   & ($k, k+l, k, 1$) \\

$(i,j)$ & $k+2\leq i \leq m-1$  &($m-i+j,m-i-j+l,m-i+j+1,i-k+j-2$)\\
&$1\leq j \leq l$  &  \\
$(k+1,j)$ & $1\leq j \leq l$   & ($m-i+j, m-i+l-j,k+j ,j-1$) \\

$(i,j)$ & $k+2\leq i \leq m-1$  &($m-i+n-j,m-i+j-l,$\\
&$l+1\leq j \leq n-1$  & $m-i+1+n-j,n+i-k-j)$ \\
$(k+1,j)$ & $l+1\leq j \leq n-1$   & ($n-j+k,k+j-l,n-j+k ,n-j+1$) \\

	  \hline
\end{tabular}
\end{center}
\end{table}

\begin{table}
\tiny
\begin{center}
\caption{  Metric coordinates of edges of $T_{2k+1,2l}$}
\label{vtor6}
\begin{tabular}{|c|c|c|}

 \hline
  edge & cond. & $r(e,S)$\\
 \hline
 $(0,0)(0,1)$ &  & ($0,l-1,1,k$) \\
 $(0,0)(1,0)$ &  & ($0,l,0,k+1$) \\
 $(0,0)(0,n-1)$ &  & ($0,l-1,1,k+1$) \\
 $(0,0)(m-1,0)$ &  & ($0,l,1,k$) \\
$(0,j)(0,j+1)$ & $1\leq j \leq l-1$   & ($j, l-j-1, j+1, k+j-1$) \\
$(i,0)(i+1,0)$ &  $1\leq i \leq k$ & ($i,i+l,i-1,k-i+1$) \\
$(i,j)(i,j+1)$ &  $1\leq i \leq k$ & ($i+j,i+l-j-1,j+i-1,k+j-i$) \\
&  $1\leq j \leq l-1$ &  \\
$(i,l)(i,l+1)$ &  $1\leq i \leq k$ & ($l+i-1,i,l+i-2,k-i+l$) \\
$(i,0)(i,1)$ &  $1\leq i \leq k$ & ($i,l-1+i,i-1,k-i+1$) \\
$(0,j)(1,j)$ &  $1\leq j \leq l$ & ($j,l-j,j,k+j-1$) \\
$(0,j)(1,j)$ &  $1\leq j \leq l$ & ($j,l-j,j,k+j-1$) \\
$(i,j)(i,j+1)$ &  $1\leq i \leq k$ & ($n-j+i-1,n-j+i,$ \\
&  $l+1\leq j \leq n-2$ & $j-l+i-1,n-j+k-i-1$) \\
$(0,j)(0,j+1)$ &  $l+1\leq j \leq n-2$ & ($n-j-1,j-l,n-j,k+n-j$) \\
$(i,j)(i,j+1)$ &  $1\leq i \leq k$ & ($n-j-1+i,j-l+i,n-j-2+i,k+n-j-i+1$) \\
&  $l+1\leq j \leq n-2$ &  \\
$(i,j)(i+1,j)$ &  $1\leq i \leq k$ & ($i+j,i+l-j,j+i-1,k-i+j-1$) \\
&  $1\leq j \leq l$ &  \\
$(i,0)(i,n-1)$ &  $1\leq i \leq k$ & ($i,i+l-1,i-1,k-i+2$) \\
$(i,j)(i+1,j)$ &  $1\leq i \leq k-1$ & ($n-j+i,j-l+i,$ \\
&  $l+1\leq j \leq n-1$ &  $n-j+i-1,n+k-i-j+1$)\\
$(0,j)(1,j)$ &  $l+1\leq j \leq n-1$ & ($n-j,j-l,n-j,n-j+k+1$) \\
$(i,j)(i+1,j)$ &  $k+1\leq i \leq m-2$ & ($m-i+j-1,m-i+l-j-1,$ \\
&  $1\leq j \leq l$ & $m-i+j,i-k+j-2$)\\
$(i,j)(i,j+1)$ &  $k+2\leq i \leq m-1$ & ($m-i+j,m-i+l-j-1,$ \\
&  $1\leq j \leq l-1$ &$m-i+1+j,i-k+j-2$) \\

$(k+1,j)(k+1,j+1)$ &  $1\leq j \leq l-1$ & ($m-i+j,m-i+l-j-1,k+j,i-k+j-2$) \\
$(i,0)(i+1,0)$ &  $k+1\leq i \leq m-2$ & ($m-i-1,m-i+l-1,m-i,i-k$) \\
$(i,0)(i,1)$ &  $k+2\leq i \leq m-1$ & ($m-i,m-i-1+l,m-i+1,i-k-1$) \\
 $(k+1,0)(k+1,1)$ &  & ($k,k+l-1,k,0$) \\
$(0,j)(m-1,j)$ &  $1\leq j \leq l$ & ($j,l-j,j+1,k+j-2$) \\
$(i,j)(i+1,j)$ &  $k+1\leq i \leq m-2$ & ($n-j+m-i-1,j-l+m-i-1,$\\
&  $l+1\leq j \leq n-1$ & $m+n-j-i,n-j+i-k)$ \\
$(i,j)(i,j+1)$ &  $k+2\leq i \leq m-1$ & ($n-j+m-i-1,m+j-i-l,$\\
&  $l+1\leq j \leq n-2$ & $n-j+m-i,n-j+i-k-1)$ \\
$(k+1,j)(k+1,j+1)$ &  $l+1\leq j \leq n-2$ & ($n-j+m-i-1,j-l+m-i,$\\
&   & $k+n-j-1,n-j+i-k-1$) \\
$(i,l)(i,l+1)$ &  $k+2\leq i \leq m-1$ & ($l+m-i-1,m-i,l+m-i,l+i-k-2$\\
$(k+1,l)(k+1,l+1)$ &   & ($l+k-1,k,l+k-1,l-1)$\\
$(i,0)(i,n-1)$ &  $k+2\leq i \leq m-1$ & ($m-i,m-i+l,m-i+1,i-k)$\\
$(k+1,0)(k+1,n-1)$ &   & ($k,l+k-1,k,1)$\\

$(0,j)(m-1,j)$ &  $l+1\leq j \leq n-1$ & ($n-j,j-l,n-j+1,k+n-j)$\\

$(0,l)(0,l+1)$ &  & ($l-1,0,l,k+l-1)$\\
$(k+1,0)(k+2,0)$ &  & ($k,k+l-1,k,0)$\\

	  \hline
\end{tabular}
\end{center}
\end{table}

Since metric coordinates of all items   are mutually different, so $S$ is a mixed resolving set. Therefore, $\beta_{M}(T_{2k+1,2l})\leq 4.$

\newpage

\textbf{Case 3.} $m=2k, n=2l+1$\\
Let $S =\{(0,0),(k,0),(0,1),(1,l+1)\}$. Since $C_m \Box C_n$ is the same as $C_n \Box C_m$, the proof of this case is similar to the proof of Case 2.\\

\textbf{Case 4.} $m=2k, n=2l$\\
Let $S = \{(0,0), (0,1), (1,l), (k,0)\}$. Let us prove that $S$ is mixed metric resolving set. The representation of coordinates of each vertex and each edge, with respect to $S$, is shown in the Table \ref{etor} and Table \ref{vtor}.

\begin{table}
\tiny
\begin{center}
\caption{  Metric coordinates of vertices of $T_{2k,2l}$}
\label{etor}
\begin{tabular}{|c|c|c|}
 \hline
  vetex & cond. & $r(v,S)$\\
 \hline
$(0,0)$ &   & ($0, 1, l+1, k$) \\
$(i,0)$ &  $1\leq i \leq k$ & ($i,i+1,l+i-1,k-i$) \\
$(0,j)$ &  $1 \leq j \leq l$ & ($j,j-1, l-j+1, k+j$) \\
$(i,j)$ & $1\leq i \leq k$  &($j+i,j-1+i,l-j+i-1,j+k-i$)\\
&$1\leq j \leq l$  &  \\
$(0,j)$  &  $l+1 \leq j \leq n-1$ & ($n-j,n-j+1,j-l+1,n-j+k$) \\

$(i,j)$ & $l+1\leq j \leq n-1$  &($n-j+i,n-j+i+1,j-l+i-1,n-j+k-i$)\\
&$1\leq i \leq k$  &  \\

$(i,0)$ &  $k+1 \leq i \leq m-1$ & ($m-i, m-i+1, m-i+l+1, i-k$) \\

$(i,j)$ & $k+1\leq i \leq m-1$  &($m-i+j,m-i+j-1,m-i+l-j+1,i-k+j$)\\
&$1\leq j \leq l$  &  \\

$(i,j)$ & $k+1\leq i \leq m-1$  &($m+n-i-j,m+n-i-j+1,$\\
&$l+1\leq j \leq n-1$  & $m+j-l-i+1,n+i-k-j)$ \\

	  \hline
\end{tabular}
\end{center}
\end{table}

\begin{table}
\tiny
\begin{center}
\caption{  Metric coordinates of edges of $T_{2k,2l}$}
\label{vtor}
\begin{tabular}{|c|c|c|}

 \hline
  edge & cond. & $r(e,S)$\\
 \hline
 $(0,0)(0,1)$ &  & ($0,0,l,k$) \\
 $(0,0)(1,0)$ &  & ($0,1,l,k-1$) \\
 $(0,0)(0,n-1)$ &  & ($0,1,l,k$) \\
 $(0,0)(m-1,0)$ &  & ($0,1,l+1,k-1$) \\
$(0,j)(0,j+1)$ & $1\leq j \leq l-1$   & ($j, j-1, l-j, k+j$) \\
$(i,0)(i+1,0)$ &  $1\leq i \leq k-1$ & ($i,i+1,l+i-1,k-i-1$) \\
$(i,j)(i,j+1)$ &  $1\leq i \leq k$ & ($i+j,i+j-1,l-j+i-2,k+j-i$) \\
&  $1\leq j \leq l-1$ &  \\
$(i,0)(i,1)$ &  $1\leq i \leq k$ & ($i,i,l+i-2,k-i$) \\
$(0,j)(1,j)$ &  $1\leq j \leq l$ & ($j,j-1,l-j,k+j-1$) \\
$(0,j)(0,j+1)$ &  $l+1\leq j \leq n-2$ & ($n-j-1,n-j,j-l+1,n-j-1+k$) \\

$(i,j)(i,j+1)$ &  $1\leq i \leq k$ & ($n-j+i-1,n-j+i,$ \\
&  $l+1\leq j \leq n-2$ & $j-l+i-1,n-j+k-i-1$) \\

$(i,j)(i+1,j)$ &  $1\leq i \leq k-1$ & ($i+j,i+j-1,l-j+i-1,j+k-i-1$) \\
&  $1\leq j \leq l$ &  \\

$(i,l)(i,l+1)$ &  $1\leq i \leq k$ & ($l+i-1,l+i-1,i-1,l+k-i-1$) \\

$(i,0)(i,n-1)$ &  $1\leq i \leq k$ & ($i,i+1,l+i-2,k-i$) \\
$(i,j)(i+1,j)$ &  $1\leq i \leq k-1$ & ($n-j+i,n-j+i+1,$ \\
&  $l+1\leq j \leq n-1$ &  $j-l+i-1,n-l+k-i-1$)\\
$(0,j)(1,j)$ &  $l+1\leq j \leq n-1$ & ($n-j,n-j+1,j-l,n-j+k-1$) \\
$(i,j)(i+1,j)$ &  $k+1\leq i \leq m-2$ & ($m-i+j-1,m-i+j-2,$ \\
&  $1\leq j \leq l$ & $m-i+l-j,i-k+j$)\\
$(i,j)(i,j+1)$ &  $k+1\leq i \leq m-1$ & ($m-i+j,m-i+j-1,$ \\
&  $1\leq j \leq l-1$ &$,m-i+l-j,j+i-k$) \\
$(i,0)(i+1,0)$ &  $k+1\leq i \leq m-2$ & ($m-i-1,m-i,m-i+l,i-k$) \\

$(i,0)(i,1)$ &  $k+1\leq i \leq m-1$ & ($m-i,m-i,m-i+l,i-k$) \\
$(k,j)(k+1,j)$ &  $1\leq j \leq l$ & ($m-k+j-1,m-k+j-2,$\\
&   & $,m-k+l-j-1,j$)  \\
$(0,j)(m-1,j)$ &  $1\leq j \leq l$ & ($j,j-1,l-j+1,m-k+j-1$) \\
$(k+1,j)(k+1,j+1)$ &  $1\leq j \leq l-1$ & ($m-k+j-1,m-k+j-2,$\\
&   & $,m-k+l-j-1,j+1$) \\
$(i,j)(i+1,j)$ &  $k+1\leq i \leq m-2$ & ($n-j+m-i-1,n+m-j-i,$\\
&  $l+1\leq j \leq n-1$ & $j-l+m-i,n-j+i-k)$ \\
$(i,j)(i,j+1)$ &  $k+1\leq i \leq m-1$ & ($n-j+m-i-1,n+m-j-i,$\\
&  $l+1\leq j \leq n-2$ & $j-l+m-i+1,n-j+i-k-1)$ \\
$(i,l)(i,l+1)$ &  $k+1\leq i \leq m-1$ & ($l+m-i-1,m+l-i-1,$\\
&   & $,m-i+1,l+i-k-1)$  \\
$(k,j)(k+1,j)$ &  $l+1\leq j \leq n-1$ & ($n-j+m-i-1,m+n-j-i+1,$\\
&   & $,j-l+k-1,n-j)$  \\
$(i,0)(i,n-1)$ &  $k+1\leq i \leq m-1$ & ($m-i,m-i+1,m-i+l,i-k)$\\
$(0,j)(m-1,j)$ &  $l+1\leq j \leq n-1$ & ($n-j,n-j+1,j-l+1,n-j+k-1)$\\
$(0,l)(0,l+1)$ &  & ($l-1,l-1,1,l-1+k)$\\
$(0,l)(m-1,l)$ &  & ($l,l-1,1,k+l-1)$\\
$(k,0)(k+1,0)$ &  & ($k-1,k,m-k-1+l,0$\\

	  \hline
\end{tabular}
\end{center}
\end{table}
Since metric coordinates of all items  are mutually different, $S$ is a mixed resolving set. Therefore, $\beta_{M}(T_{2k,2l})\leq 4.$\\

\textbf{\underline{Step 2}:}  {\em Lower bound is 4}.  \\
Torus graph is $4$-regular graph, so by Corollary \ref{lb1reg} follows
$\beta_M(T_{m,n})  \geq 1 + \lceil log_2 (r+1) \rceil = 1 + \lceil log_2 5 \rceil = 4$.

Therefore, from the previous two steps, it follows that $\beta_M(T_{m,n})=4.$
\end{proof}

\section{Direct comparison of lower bounds}

In this section, it will be given direct comparison between lower bounds known in the literature (\cite{yer17},\cite{fil19}) with the new lower bounds obtained in this paper.

First, comparison will be performed on all connected graphs with 5 vertices. There are 21 such graphs and their graphic representations could be find at \url{https://mathworld.wolfram.com/ConnectedGraph.html}. The results in the Table $\ref{jedge2}$ are given in the same order as graphic representation on link and in this table are shown the comparisons of the various lower bounds for these graphs. In Table $\ref{jedge2}$, $|E|$ is  number of edges, $\beta(G)$ and $\beta_E(G)$ are metric dimension and edge metric dimension, respectively. In the following columns, L1 and L2
are the notation for lower bounds from Proposition 4 and Theorem 1. Each
of Proposition 1, Proposition 2, Proposition 3 and Corollary 1 determines
one lower bound. For the purpose of transparency of the Table 7, we have
decided to give a unified lower bound that encompasses all of them. It will be
denoted as L3. This lower bound cannot be obtained generally, while for each
specific graph, all three lower bounds from propositions and Corollary 1 can be
calculated separately and unified together. Lower bound L4 is a LP relaxation
of the mixed metric dimension problem. In the following columns new lower
bounds N1, N2 and N3 from Corollary 3, Theorem 3 and Theorem 4 are
given respectively.

It should be noted that a total enumeration is able to quickly compute metric dimension, edge metric dimension and mixed metric dimension
for graphs up to 36 vertices, so it is used to obtain data for $\beta(G)$, $\beta_E(G)$ and $\beta_M(G)$
in Table $\ref{jedge2}$ and Table $\ref{jedge}$. Data of column L4 in these tables, that represents a LP relaxation
of the mixed metric dimension problem, can be quickly obtained by any linear programming software:
CPLEX, Gurobi, GLPK, LP\_solve, etc. Data of column N2 is also computed by a total enumeration.

\begin{table}
\small
\caption{Direct comparison of lower bounds for connected graphs with 5 vertices }
\label{jedge2}
\begin{tabular}{|l|l|l|l|l|l|l|l|l|l|l|l|l|}
\hline

&\multicolumn{3}{|c|}{}&\multicolumn{4}{|c|}{LB from lit.}&\multicolumn{3}{|c|}{ New LB }&\\
 \hline
  Num& $|E|$ & $\beta(G)$ & $\beta_E(G) $& L1 &L2 & L3&L4&N1&N2&N3& $\beta_{M}(G)$ \\
\hline
\hline
1. &  4&3&3& 2 & 1 &$\underline{\textbf{4}}$  & $\underline{\textbf{4}}$ & 2 & $\underline{\textbf{4}}$  & 2 & 4\\
 \hline
 2.&  4&2&2 &2 &1 &$\underline{\textbf{3}}$  &$\underline{\textbf{3}}$  & 2&$\underline{\textbf{3}}$  &2&3 \\
 \hline
3.&  5 & 2 & 3  & 2 & 1 & $\underline{\textbf{4}}$  & $\underline{\textbf{4}}$ & 2 &$\underline{\textbf{4}}$ &2&  4\\
 \hline
  4. &  5 & 2 & 2 & 2 & 1 & $\underline{\textbf{3}}$  &$\underline{\textbf{3}}$ & 2 & $\underline{\textbf{3}}$  & 2 &3  \\
 \hline
 5.&5&2& 2&2 & 1 & 2& $\underline{\textbf{3}}$ & 2 &2& 2&3\\
 \hline
6. &6& 2& 3& 2&1& 3& $\underline{\textbf{4}}$ &  2&$\underline{\textbf{4}}$ & 2&4\\
 \hline
 7. &6&3&3 & 2& 2&2 &3 & 3 &2&2 &4 \\
 \hline
 8.  &7&3&4 & 2& 2&$\underline{\textbf{5}}$  & $\underline{\textbf{5}}$ &3  &$\underline{\textbf{5}}$ & 2&5\\
 \hline
9.  &  4 &1&1 & 1&1 & $\underline{\textbf{2}}$ &$\underline{\textbf{2}}$  & $\underline{\textbf{2}}$  &$\underline{\textbf{2}}$ &$\underline{\textbf{2}}$  &2\\
 \hline
 10.&  5&2&2 & 2 & 1 & $\underline{\textbf{3}}$ &$\underline{\textbf{3}}$ & 2 &$\underline{\textbf{3}}$ &2& 3 \\
 \hline
11.  &6&2&3 &2 &2 & $\underline{\textbf{4}}$ &$\underline{\textbf{4}}$  & 3 &$\underline{\textbf{4}}$ &2 &4\\
 \hline
  12.   &6&2&3 &2 &1 & $\underline{\textbf{4}}$ &$\underline{\textbf{4}}$ &  2&$\underline{\textbf{4}}$ &2 &4\\
 \hline
13. &7&3&3 & 2&1 &$\underline{\textbf{4}}$  &$\underline{\textbf{4}}$  & 2 &$\underline{\textbf{4}}$ &2&4 \\
 \hline
14. &5&2&2 & 1 & 2& 0& $\underline{\textbf{3}}$ &$\underline{\textbf{3}}$   &$\underline{\textbf{3}}$ & 2&3\\
 \hline
15. &6&2&2 &2 & 2& 1& $\underline{\textbf{3}}$ & $\underline{\textbf{3}}$  &$\underline{\textbf{3}}$ & 2&3\\
 \hline
16.&7&2&3 & 2& 2& 2& $\underline{\textbf{4}}$ & 3 &$\underline{\textbf{4}}$ &2 &4\\
 \hline

  17.&8&3& 4&2 &2 & $\underline{\textbf{5}}$ & $\underline{\textbf{5}}$ &3  &$\underline{\textbf{5}}$ &2&5 \\
 \hline
18.&7&2&3 & 2&2 &3 &$\underline{\textbf{4}}$  &3  &3&2&4 \\
 \hline
19. &8&2&$\underline{\textbf{4}}$  & 2&3 & 2&$\underline{\textbf{4}}$ & 3 &$\underline{\textbf{4}}$ &2 &4\\
 \hline
  20. &9&3& 4& 2&3 &$\underline{\textbf{5}}$  &$\underline{\textbf{5}}$  & 3 &$\underline{\textbf{5}}$ & 2&5\\
 \hline
  21.  &10&4& 4& 2& 3&$\underline{\textbf{5}}$  &$\underline{\textbf{5}}$  & 4& $\underline{\textbf{5}}$ &3& 5\\
 \hline
\end{tabular}
\end{table}

As it can be seen from Table \ref{jedge2} new lower bounds obtain better results than bounds L1, L2, L3 and L4 from the literature. In three of 21 cases mixed metric dimension was not reached by new lower bounds.

These results are not quite representative since graphs in question are with small number of vertices ($|V| = 5$). In order to improve comparison we conducted it on some well-known graphs.

Additional comparison will be conducted on 12 well-known graphs. In the Table $\ref{jedge}$ are shown graph characteristics for each graph, while the comparisons of the various lower bounds are shown in the Table  $\ref{jedge1}$. Columns in Table $\ref{jedge1}$, nominated as L1, L2, L3, L4, N1, N2 and N3, have the same meaning as in Table \ref{jedge2}. From the Table \ref{jedge1} it can be seen that the new lower bounds are often better than the ones from the literature.

However, only in two cases mixed metric dimension equals lower bound (1 from literature and 1 from the new ones). In 4 cases lower bounds have difference 1 from exact values.
It should be noted that all 7 lower bounds should be used in union since different lower bounds are applicable for different graphs and no one is uniquely dominant over the others. The important feature of presented lower bounds is that their calculation complexity is much smaller in comparison with standard/edge/mixed metric dimension problem complexity.

\begin{table}
\small
\caption{Graph characteristic}
\label{jedge}
\begin{tabular}{|l|l|l|l|l|l|l|}

\hline
Num&  Name & $|V|$& $|E|$& $\beta(G)$&$\beta_E(G)$& Another  notions \\
\hline
1.& Rook's graph & 36& 180& 7 & 8 & srg(36,10,4,2) \\
\hline

2.& 9-triangular graph & 36& 252& 6 & 32 & Johnson graph; srg(36,14,7,4) \\

\hline
3.& Clebsch graph  &16 &40 & 4 &9& srg(16,5,0,2)\\
\hline

4.& Generalized quadrangle  &27 &135& 5 & 18 & srg(27,10,1,5)\\
\hline

5.& Hypercube  $Q_5$ &32 &80& 4 & 4 & $5-$cube graph \\

\hline

6.& Kneser  (7,2)&21 &105&  5 & 12 & srg(21,10,3,6)\\

 \hline
7.& Mobius Kantor  &16 &24& 4 & 4 & Generalized Petersen $GP(8,3)$ \\

 \hline
8.& Paley graph  &13 &39& 4 & 6 & srg(13,6,2,3)\\
 \hline
9.& Petersen graph  &10 &15& 3 & 4 & Generalized Petersen $GP(5,2)$\\
 \hline
10.& Small graph 6 vert. &6 &11& 3 & 4 & \\
 \hline
11.& Hamming H(2,6)& 36 & 180 &  7 & 8 & $K_6 \Box K_6$\\
 \hline
12.& Hamming H(3,3) & 27 & 81 &  4 & 5 & $K_3 \Box K_3  \Box K_3$\\
 \hline
\end{tabular}
\end{table}

\begin{table}
\small
\caption{Direct comparison of lower bounds for some  graphs}
\label{jedge1}
\begin{tabular}{|l|l|l|l|l|l|l|l|l|l|}
\hline

&\multicolumn{8}{|c|}{LB from lit.}\\
 \hline
Num& L1 &L2 & L3&L4&N1&N2&N3& $\beta_{M}(G)$\\
\hline
\hline

1.   & 4& 5 &0 & 6 & 5& 6 &8&9 \\
 \hline
 2.&4& 5 & 0& 18 & 5 & 9 & 8 & 32\\
 \hline
3. &   3 & 4 & 0& 4 & 4 & 5 & 5& 9\\
 \hline
  4.  & 4 & 5 &0 & 4 & 5 & 6 & 8& 18\\
 \hline
 5.&   3& \bf{\underline{4}}&0 & 2& $\underline{\textbf{4}}$   & 2 & 3& 4 \\
 \hline
 6.&   4& 5 &0& 4& 5 & 6 & 6& 12\\
 \hline
 7.&   2& 3 &0& 2& 3 & 3 & 3& 4\\
 \hline
8. &  3& 4 &0& 4 & 4 & 5 & 5& 6\\
 \hline
  9.   & 2 & 3 &0&  4& 3 & 4 & 4&6 \\
 \hline
 10.&    2 & 2 &$\underline{\textbf{5}}$ &$\underline{\textbf{5}}$  & 3 & 4 & 3& 5\\
 \hline
 11.  & 4 & 5 &0&6 &5 & 6  & 8 & 9\\
 \hline
 12.  &  3& 4 &0& 3& 4& 3  & 4 &6 \\
 \hline
\end{tabular}
\end{table}

Using the results shown in the Table $\ref{jedge1}$, it can be concluded that these bounds do not give some perfect results, as expected, but we can see that there are graphs on which lower bounds L1 and L2 have been reached, as well as new lower bounds N1 and N2.

Finally, it must be noted that even with additional comparison, the representative sample is not statistically quite correct, but is given to illustrate usage and efficiency of presented lower bounds in some capacity. Graphs that we took in consideration varied from order 10 up to order 36. Second number (order 36) is chosen because exact value of their mixed metric dimensions could be quickly determined. Since problem of finding exact mixed metric dimension is NP-hard, our selection do not include large graphs.

\section{Conclusions}

In this paper problem of mixed metric dimension was considered. Since the problem of finding exact values of mixed metric dimension is NP-hard in general case, determining quality lower bounds is of major interest. Paper presented three new lower bounds in accordance with minimization nature of the problem. Next,
one of these lower bounds is tight for torus graph, so it is used for obtaining exact value of its mixed metric dimension.

In the sequel, quality of new lower bounds was compared with four known lower bounds from the literature. Testing was performed on two groups: all 21 connected graphs of 5 vertices and 12 well-known graphs with 10 up to 36 vertices. For the first group, one of proposed lower bounds in 20 out of 21 cases reached value of mixed metric dimension. For the second group, situation is quite opposite, so only in 2 cases proposed lower bound reached value of mixed metric dimension.

Further research could be developed  in two main directions. The first direction  could be finding
general upper bounds for mixed metric dimension. The second direction is usage of presented lower bounds in order to determine the exact values of mixed metric dimension of some well-known classes of graphs.

%%\section*{Acknowledgments}

\section*{References}

\end{document}